\numberwithin{equation}{section}
\newtheorem{torsion}{Theorem}
\newtheorem{exhaust}[torsion]{Proposition}
\newtheorem{common-sum}[torsion]{Proposition}
\newtheorem{magic-square}[torsion]{Theorem}
\newtheorem{loubere}[torsion]{Corollary}
\begin{document}

\title{Semi-Magic Squares and Elliptic Curves}


\author{Edray Herber Goins}
\address{Purdue University \\ Department of Mathematics \\ Mathematical Sciences Building \\ 150 North University Street \\ West Lafayette, IN 47907-2067}
\email{egoins@math.purdue.edu}

\begin{abstract}  We show that, for all odd natural numbers $N$, the $N$-torsion points on an elliptic curve may be placed in an $N \times N$ grid such that the sum of each column and each row is the point at infinity. \end{abstract}

\subjclass[2000]{14H52, 11Axx, 05Bxx}
\keywords{elliptic curve; torsion; magic square}

\maketitle

\section{Introduction}

Let $N$ be a positive integer, and consider the integers 1, 2, \dots, $N^2$.  An $N \times N$ grid containing these consecutive integers such that the sum of each column and each row is the same is called a magic square.  (This is usually called a semi-magic square in the literature; see \cite{Weis1}.)  For example, when $N = 3$, we have the grid
\[ \begin{tabular}{|c|c|c|} \hline 3 & 5 & 7 \\ \hline 8 & 1 & 6 \\ \hline 4 & 9 & 2 \\ \hline \end{tabular} \]
\noindent where the sum of each column and each row is 15.

We need not limit ourselves to a grid with integer entries.  The author of \cite{MR1860304}, inspired by the discussion in \cite[\S 1.4]{MR1469740}, considered the problem of arranging the 9 points of inflection on an elliptic curve in a $3 \times 3$ magic square.  That is, it is possible to arrange the points of order 3 in a $3 \times 3$ grid so that the sum of each row and each column is the same, namely the point at infinity.  We generalize this result:
\begin{torsion}  \label{torsion} Let $N \geq 1$ be an odd integer, let $E$ be an elliptic curve defined over an algebraically closed field with characteristic not dividing $N$.  Then the $N^2$ points of order $N$ on $E$ can be placed in an $N \times N$ magic square such that the sum of each column and each row is the point at infinity $\mathcal O$. \end{torsion}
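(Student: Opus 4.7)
The plan is to use the group structure of $E[N]$ directly rather than any combinatorial construction. Under the hypothesis that the characteristic of the base field does not divide $N$, the standard theory of elliptic curves provides
\[
E[N] \;\simeq\; \mathbb Z/N\mathbb Z \,\oplus\, \mathbb Z/N\mathbb Z,
\]
so I would begin by fixing generators $P_1, P_2$ of $E[N]$ for which every $N$-torsion point has a unique expression $i P_1 + j P_2$ with $0 \le i, j \le N-1$.

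I would then define the grid by placing the point $i P_1 + j P_2$ into the cell at row $i$, column $j$. The uniqueness of the representation makes this a bijection between the $N^2$ cells and the $N^2$ elements of $E[N]$, so every torsion point appears exactly once. The sum along row $i$ collapses to
\[
\sum_{j=0}^{N-1} \bigl( i P_1 + j P_2 \bigr) \;=\; N i \, P_1 \;+\; \tfrac{N(N-1)}{2} \, P_2,
\]
and the sum along column $j$ is the analogous expression with the roles of the basis vectors reversed.

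With this setup, the entire content of the theorem reduces to the arithmetic observation that, for odd $N$, both coefficients on the right-hand side above are divisible by $N$: the first trivially, and the second because $(N-1)/2$ is an integer, whence $\tfrac{N(N-1)}{2} P_2 = \tfrac{N-1}{2}(N P_2) = \mathcal O$. There is no serious obstacle once the linear parametrization is in place; the step worth flagging is that this divisibility is precisely what fails for even $N$ (the row sum would then be $(N/2) P_2 \ne \mathcal O$), which explains why the hypothesis that $N$ is odd cannot be removed and, in the end, is the only place where it is used.
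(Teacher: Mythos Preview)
Your proof is correct. It is precisely the special case $a=d=1$, $b=c=0$ of the paper's Uniform Step Method---a case the paper itself mentions in the Applications section (crediting J.-K.~Yu)---but you argue it directly without passing through the general framework. The paper instead builds the full parametrized machinery: it shows that whenever $a,b,c,d$ and $ad-bc$ are all coprime to $N$ the placement $R_k \mapsto (x_k,y_k)$ fills the grid bijectively and has row and column sums equal to $\mathcal O$, and then deduces the theorem by specializing to De la Loub\`ere's values $a=1$, $b=c=-1$, $d=2$. Your route is more elementary---no auxiliary integers, no coprimality conditions to check---and entirely sufficient for the theorem as stated; what the paper's approach buys is a whole family of distinct magic squares and the connection to the classical Siamese construction. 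The decisive arithmetic step, that $\tfrac{N(N-1)}{2}\equiv 0 \pmod N$ exactly when $N$ is odd, is identical in both arguments.
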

\noindent We construct such a grid using Lehmer's Uniform Step Method, as motivated by the discussion in \cite{MR514402}.  In particular, the theorem holds for any group $G$ such that the $N$-torsion $G[N] \simeq \left( \mathbb Z / N \mathbb Z \right) \times \left( \mathbb Z / N \mathbb Z \right)$.

The author would like to thank Jiu-Kang Yu for helpful comments.

\section{Semi-Magic Squares over Abelian Groups}

As stated above, we define a magic square to be an $N \times N$ grid containing the consecutive integers 1 through $N^2$ such that the sum of each column and each row is the same.  Strictly speaking, this is a semi-magic square, but we abuse notation slightly for the sake of brevity.  We do not limit ourselves to constructing magic squares with integer entries.  Indeed, we will construct an $N \times N$ magic square for a certain class of abelian groups.

Let $G$ be an abelian group under $\oplus$.  Given $P \in G$, denote $[-1]P$ as its inverse and $[0] P = \mathcal O$ as the identity.  For each nonzero integer $m$, denote $[m]P$ as $[\pm 1]P$ added to itself $|m|$ times, where ``$\pm$'' is chosen as the sign of $m$.  Denote $G[m] \subseteq G$ as that subgroup consisting of points $P \in G$ such that $[m]P = \mathcal O$.  We will always assume that $G$ is chosen such that for some positive integer $N$ there is a group isomorphism
\begin{equation} \label{isomorphism} \psi: \qquad \begin{CD} \left( \mathbb Z / N \mathbb Z \right) \times \left( \mathbb Z / N \mathbb Z \right) @>{\sim}>> G[N]. \end{CD} \end{equation}
\noindent We have a bijection $\{ 1, 2, \dots, N^2 \bigr \} \to \left( \mathbb Z / N \mathbb Z \right) \times \left( \mathbb Z / N \mathbb Z \right)$ given by
\begin{equation} \label{bijection} \phi: \qquad k \mapsto \left( k-1 \pmod{N}, \quad \left \lfloor \frac {k-1}N \right \rfloor \pmod{n} \right) \end{equation}
\noindent where $\lfloor \cdot \rfloor$ is the greatest integer function.  That is, if $1 \leq k \leq N^2$ then we can write $k-1 = m + N \, n$ for some unique $0 \leq m, \, n < N$, and so we map $k \mapsto (m,n)$.  This means we have a bijection of sets
\[ \psi \circ \phi: \qquad \begin{CD} \{ 1, 2, \dots, N^2 \bigr \} @>{\sim}>> G[N]. \end{CD} \]
\noindent We will use this identification to place the elements in $G[N]$ in an $N \times N$ magic square.

There are two examples in particular which will be of interest to us.  Upon fixing $N$, the group $G = \left( \mathbb Z / N \mathbb Z \right) \times \left( \mathbb Z / N \mathbb Z \right)$ satisfies the criterion above.  As another example, fix an algebraically closed field $F$ and let $E$ be an elliptic curve defined over $F$.  We may choose $G = E(F)$ as the $F$-rational points on $E$, where we have a non-canonical isomorphism $G[N] \simeq \left( \mathbb Z / N \mathbb Z \right) \times \left( \mathbb Z / N \mathbb Z \right)$ only when the characteristic of $F$ does not divide $N$.  (For more properties of elliptic curves, see \cite{MR87g:11070}.)

\section{Uniform Step Method}

Fix a positive integer $N$.  Let $G$ be an abelian group under $\oplus$, and assume
\[ G[N] = \bigl \{ R_1, \, R_2, \, \dots, \, R_k, \, \dots, \, R_{N^2} \bigr \} \simeq \left( \mathbb Z / N \mathbb Z \right) \times \left( \mathbb Z / N \mathbb Z \right). \]
\noindent We wish to place these $N^2$ elements in an $N \times N$ grid such that the sum of each row and the sum of each column, as an element in $G$, is the same.  We use an idea of D. H. Lehmer from 1929, known as the Uniform Step Method.  To this end, we are motivated by the discussion in \cite[Chapter 4]{MR514402}.

Given an $N \times N$ grid, we consider its entries in cartesian coordinates.  For the moment, fix integers $a$, $b$, $c$, and $d$, and consider placing the element $R_k \in G[N]$ in the $(x_k, y_k)$ position.  After arbitrarily placing $R_1$ in the $(x_1, y_1)$-position, we will define $x_k$ and $y_k$ by the recursive sequence
\[ \begin{aligned} x_k & \equiv x_1 + a \, (k-1) + b \left \lfloor \frac {k-1}N \right \rfloor \pmod {N} \\ y_k & \equiv y_1 + c \, (k-1) + d \left \lfloor \frac {k-1}N \right \rfloor \pmod{N}  \end{aligned} \qquad \text{for} \qquad 1 \leq k \leq N^2. \]
\noindent We will exhibit conditions on these integers $a$, $b$, $c$, and $d$ such that the sequences above indeed generate a magic square.

\begin{exhaust}  If $N$ is odd and relatively prime to $\left(a \, d-b \, c \right)$, then the sequence $(x_k, y_k)$ places exactly one $R_k$ in each of the $N^2$ cells of the $N \times N$ grid.
\end{exhaust}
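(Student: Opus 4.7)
The plan is to reduce the claim to the invertibility of a $2 \times 2$ matrix modulo $N$. Since $1 \le k \le N^2$, division with remainder gives a unique decomposition $k-1 = m + N\,n$ with $0 \le m, n < N$, where $m \equiv k-1 \pmod{N}$ and $n = \lfloor (k-1)/N \rfloor$. Substituting and using $a\,N\,n \equiv 0 \pmod{N}$, the recursion collapses to the closed form
\begin{equation*}
x_k - x_1 \equiv a\,m + b\,n \pmod{N}, \qquad y_k - y_1 \equiv c\,m + d\,n \pmod{N}.
\end{equation*}

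Under the bijection $\phi$ of \eqref{bijection}, the assignment $k \leftrightarrow (m,n)$ runs through all of $\left(\mathbb Z / N \mathbb Z\right) \times \left(\mathbb Z / N \mathbb Z\right)$ exactly once as $k$ ranges from $1$ to $N^2$. Hence placing exactly one $R_k$ in each cell of the grid is equivalent to showing that the affine map
\[
T: \left(\mathbb Z / N \mathbb Z\right)^2 \longrightarrow \left(\mathbb Z / N \mathbb Z\right)^2, \qquad (m,n) \longmapsto \bigl(x_1 + a\,m + b\,n, \; y_1 + c\,m + d\,n\bigr)
\]
is a bijection.

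The final step is a standard fact from linear algebra over $\mathbb Z / N \mathbb Z$: translation on a finite group is always a bijection, and the linear part of $T$, represented by the matrix $\bigl(\begin{smallmatrix} a & b \\ c & d \end{smallmatrix}\bigr)$, is a bijection on $\left(\mathbb Z / N \mathbb Z\right)^2$ if and only if its determinant $a\,d - b\,c$ is a unit modulo $N$, equivalently $\gcd(N, \, a\,d - b\,c) = 1$. This is precisely the hypothesis, so $T$ is bijective and the claim follows. I do not anticipate any real obstacle; in fact the oddness of $N$ plays no role in this proposition and will only enter later, when one forces the row and column sums to equal $\mathcal O$.
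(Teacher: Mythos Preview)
Your proof is correct and matches the paper's approach essentially line for line: both reduce to the closed form $x_k \equiv x_1 + a\,m + b\,n$, $y_k \equiv y_1 + c\,m + d\,n$ via $\phi$ and then invoke invertibility of $\bigl(\begin{smallmatrix} a & b \\ c & d \end{smallmatrix}\bigr)$ over $\mathbb{Z}/N\mathbb{Z}$ to conclude injectivity (hence bijectivity) of $k \mapsto (x_k,y_k)$. Your observation that the oddness of $N$ is not actually needed here is also correct---the paper's own proof never uses it.
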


\begin{proof}  It suffices to show that $(x_{k_1}, y_{k_1}) = (x_{k_2}, y_{k_2})$ only when $k_1 = k_2$; for then we would have $N^2$ different points so they must fill in the entire grid.  Using the bijection $\phi$ as in \eqref{bijection} note that we may write
\[ \begin{aligned} x_k & \equiv x_1 + a \, m + b \, n \pmod{N} \\ y_k & \equiv y_1 + c \, m + d \, n \pmod{N} \end{aligned} \qquad \text{where} \qquad (m,n) = \phi(k). \]
\noindent Write $(m_1,n_1) = \phi(k_1)$ and $(m_2,n_2) = \phi(k_2)$, so that
\[ (x_{k_1}, y_{k_1}) = (x_{k_1}, y_{k_2}) \quad \iff \quad \begin{aligned} a \, (m_1 - m_2) + b \, (n_1 - n_2) & \equiv 0 \pmod{N} \\ c \, (m_1 - m_2) + d \, (n_1 - n_2) & \equiv 0 \pmod{N} \end{aligned} \]
\noindent Since $a d - b c \pmod{N}$ is invertible, we see that this happens if and only if
\[ \phi(k_1) = \left( m_1, \ n_1 \right) = \left( m_2, \ n_2 \right) = \phi(k_2) \]
\noindent and so $k_1 = k_2$. \end{proof}

\begin{common-sum} If $N$ is relatively prime to $a$ and $b$, then the sum of the entries in the $i$th column is $\mathcal O$.  If $N$ is relatively prime to $c$ and $d$, then the sum of the entries in the $j$th row is $\mathcal O$.  \end{common-sum}

\begin{proof}  The entries in the $i$th column consist of those $R_k$ corresponding to $k$ such that $x_k = i$.  Similarly, the entries in the $j$th row consist of those $R_k$ corresponding to $k$ such that $y_k = j$.  Hence the sum of the entries in the $i$th column and $j$th row are
\[ \sum_{x_k = i} R_k \qquad \text{and} \qquad \sum_{y_k = j} R_k, \qquad \text{respectively}. \]

First we determine the values of $k$ which occur in the $i$th column.  Since $N$ is relatively prime to $a$ and $b$, there are exactly $N$ pairs $(m,n) \in \left( \mathbb Z / N \mathbb Z \right) \times \left( \mathbb Z / N \mathbb Z \right)$ satisfying $a \, m + b \, n \equiv i - x_1 \pmod{N}$; indeed, given any $m$ we can solve for $n$, and vice-versa.  Hence there are exactly $N$ integers $k \equiv 1 + m + N \, n \pmod{N^2}$ such that $x_k = i$, which we denote by $k_\alpha$.  If we denote $(m_\alpha, n_\alpha) = \phi(k_\alpha)$ using the bijection in \eqref{bijection}, then it is clear we have $\{ \dots, \, m_\alpha, \, \dots \} = \{ \dots, \, n_\alpha, \, \dots \} = \mathbb Z / N \mathbb Z$.

Now we compute the sum of the values in the $i$th column.  Using the group isomorphism in \eqref{isomorphism}, denote $P = \psi \left( (1,0) \right)$ and $Q = \psi \left( (0,1) \right)$ so that we have $R_k = [m] P \oplus [n] Q$ when $(m,n) = \phi(k)$.  This gives the sum
\[ \sum_{x_k = i} R_k = \sum_\alpha R_{k_\alpha} = \sum_\alpha \bigl( [m_\alpha] P \oplus [n_\alpha] Q \bigr) = [m'] P \oplus [n'] Q \]
\noindent where we have set
\[ m' \equiv n' \equiv \sum_\alpha m_\alpha \equiv \sum_\alpha n_\alpha \equiv \sum_{m \in \mathbb Z / N \mathbb Z} m \equiv \frac {N \, (N-1)}2 \pmod{N}. \]
\noindent Since $N$ is assumed odd, this sum is a multiple of $N$ so that $[m'] P = [n'] Q = \mathcal O$.  Hence the sum of the entries in the $i$th column is indeed $\mathcal O$.

A similar argument works for the $j$th row.  \end{proof}

We summarize this as follows.

\begin{magic-square} \label{magic-square} Let $G$ be an abelian group under $\oplus$, and assume that there is a positive odd integer $N$ such that
\[ G[N] = \bigl \{ R_1, \, R_2, \, \dots, \, R_k, \, \dots, \, R_{N^2} \bigr \} \simeq \left( \mathbb Z / N \mathbb Z \right) \times \left( \mathbb Z / N \mathbb Z \right). \]
\noindent Fix integers $a$, $b$, $c$, and $d$ relatively prime to $N$ such that $(a \, d-b \, c)$ is also relatively prime to $N$, and consider the sequence $(x_k, y_k)$ defined by
\[ \begin{aligned} x_k & \equiv x_1 + a \, (k-1) + b \left \lfloor \frac {k-1}N \right \rfloor \pmod {N} \\ y_k & \equiv y_1 + c \, (k-1) + d \left \lfloor \frac {k-1}N \right \rfloor \pmod{N}  \end{aligned} \qquad \text{for} \qquad 1 \leq k \leq N^2. \]

The $N \times N$ grid formed by placing $R_k$ in the $(x_k, y_k)$ position is a magic square, where the sum of each column and each row is the identity $\mathcal O$.  \end{magic-square}

We remark that this method does not exhaust all ways in which a magic square can be generated.  For example, this method does not seem to work for $N$ even.  Indeed, the sum of each column and each row involves the expression $N(N-1)/2$, which in general is not a multiple of $N$.  Also, when $N = 4$, we have the magic square
\[ \begin{tabular}{|c|c|c|c|} \hline 16 & 3 & 2 & 13 \\ \hline 5 & 10 & 11 & 8 \\ \hline 9 & 6 & 7 & 12 \\ \hline 4 & 15 & 14 & 1 \\ \hline \end{tabular}\]

\noindent It is easy to check that such a square cannot be generated by a sequence $(x_k, y_k)$ for any $a$, $b$, $c$, or $d$.  This first appeared in 1514 in an engraving by Albrecht D\"urer entitled ``Melencolia.''

\section{Applications}

We can specialize $a$, $b$, $c$, and $d$ to generate examples of magic squares.

\begin{loubere} Let $G$ be an abelian group under $\oplus$, and assume that there is an odd positive integer $N$ such that
\[ G[N] = \bigl \{ R_1, \, R_2, \, \dots, \, R_k, \, \dots, \, R_{N^2} \bigr \} \simeq \left( \mathbb Z / N \mathbb Z \right) \times \left( \mathbb Z / N \mathbb Z \right). \]
\noindent Then these elements can be placed in an $N \times N$ magic square such that the sum of each column and each row is the identity $\mathcal O$.  \end{loubere}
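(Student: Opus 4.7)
My plan is to reduce the corollary to Theorem~\ref{magic-square} by exhibiting an explicit quadruple of integers $(a,b,c,d)$ satisfying the three coprimality conditions in that theorem: $a$, $b$, $c$, $d$, and $ad-bc$ all coprime to $N$. Once such a choice is fixed, Theorem~\ref{magic-square} immediately produces the desired magic square, with row and column sums equal to $\mathcal O$.

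The key step is therefore the verification that a suitable $(a,b,c,d)$ exists for every odd $N$. Since $N$ is odd, both $1$ and $2$ are coprime to $N$, so any quadruple whose entries lie in $\{\pm 1, \pm 2\}$ and whose determinant lies in $\{\pm 1, \pm 2\}$ will do. For concreteness I would take $(a,b,c,d) = (1,2,1,1)$, so that $ad - bc = -1$; every required greatest common divisor is then trivially $1$. With $R_1$ placed in an arbitrary starting cell $(x_1, y_1)$, the recursion of Theorem~\ref{magic-square} fills in the rest of the grid uniquely and yields the magic square.

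I do not anticipate any real obstacle, since the heavy lifting has already been done in the previous theorem. The only subtlety worth mentioning is historical: the classical de la Loub\`ere ``Siamese'' recipe on integers corresponds to $(a,b,c,d) = (1,0,1,-1)$ --- move one step up-and-to-the-right, and shift down one row whenever $N \mid (k-1)$ --- but here $b = 0$ violates the coprimality with $N$ needed for the group-theoretic column-sum identity of the second proposition. The remedy is to \emph{tilt} the wraparound vector slightly, as in the choice $(1,2,1,1)$ above, so that its components remain coprime to $N$ while the overall step matrix stays invertible modulo $N$.
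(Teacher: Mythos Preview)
Your proposal is correct and follows essentially the same route as the paper: both proofs simply invoke Theorem~\ref{magic-square} with an explicit choice of $(a,b,c,d)$ whose entries and determinant are coprime to the odd integer $N$. The paper selects $(a,b,c,d)=(1,-1,-1,2)$ with determinant $1$ (which it identifies as De la Loub\`ere's method), whereas you choose $(1,2,1,1)$ with determinant $-1$; either choice works, and your side remark about the naive parametrization with $b=0$ failing the hypothesis of the column-sum proposition is a nice clarification.
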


\begin{proof} We follow the construction using a method first outlined by De la Loub\`ere in 1693.  (An example of how this method works follows at the end of the paper.)  Using Theorem \ref{magic-square}, set $a = 1$, $b = c = -1$, and $d = 2$.  As $N$ is odd, it is relatively prime to these integers as well as the determinant $a \, d - b \, c = 1$. \end{proof}

\emph{Remark.} Theorem \ref{torsion} follows from this corollary, since the group $E[N]$ of $N$-torsion points on an elliptic curve $E$ is isomorphic to $\left( \mathbb Z / N \mathbb Z \right) \times \left( \mathbb Z / N \mathbb Z \right)$.

The following was pointed out to the author by J.-K. Yu.  Upon choosing the basis $\{ P, \, Q \}$ for $G[N]$ given by $P = \psi \bigl( (1,0) \bigr)$ and $Q = \psi \bigl( (0,1) \bigr)$, we may write $R_k = [m] P \oplus [n] Q$ when $(m,n) = \phi(k)$.  (Here, we use the maps defined in \eqref{isomorphism} and \eqref{bijection}.)  In this way, we may identify $R_k$ with $(m,n)$.  If we choose $a = d = 1$ and $b = c = 0$, then we have a magic square upon placing $(m,n) = \phi(k)$ in the $(x_k, y_k)$-position.  In general, if for odd $N$ we have an $N \times N$ Latin Square with the $(m,n)$-position having entry $a_{mn}$ then we may place $(m, a_{mn})$ in the $(x_k, y_k)$-position.  (For more on Latin squares, see \cite{Weis2}.)

We discuss a specific example by considering the 3-torsion on elliptic curves; to this end, set $N = 3$.  We explain how this construction generalizes that in \cite{MR1860304}.  Consider an elliptic curve defined over the complex numbers $\mathbb C$, and let $G = E(\mathbb C)$ be the group of complex points on the curve.  Then it is well-known that we can express the 3-torsion as
\[ E[3] = \bigl \{ A, \ B, \ C, \ D, \ [-1]A, \ [-1]B, \ [-1]C, \ [-1]D, \ \mathcal O \bigr \} \simeq \left( \mathbb Z / 3 \mathbb Z \right) \times \left( \mathbb Z / 3 \mathbb Z \right) \]
\noindent where $B = A \oplus D$ and $[-1]B = C \oplus D$.  If we label these points as
\[ \begin{aligned} R_1 & = \mathcal O, \\ R_2 & = [-1]B, \\ R_3 & = B, \end{aligned} \qquad \begin{aligned} R_4 & = D, \\ R_5 & = [-1]A, \\ R_6 & = [-1]C, \end{aligned} \qquad \begin{aligned} R_7 & = [-1]D, \\ R_8 & = C, \\ R_9 & = A; \end{aligned} \]
\noindent then we can use the magic square from the introduction to place the 3-torsion in a magic square:
\[ \begin{tabular}{|c|c|c|} \hline 3 & 5 & 7 \\ \hline 8 & 1 & 6 \\ \hline 4 & 9 & 2 \\ \hline \end{tabular} \qquad \implies \qquad \begin{tabular}{|r|r|r|} \hline $B$ & $[-1]A$ & $[-1]D$ \\ \hline $C$ & $\mathcal O$ & $[-1]C$ \\ \hline $D$ & $A$ & $[-1]B$ \\ \hline \end{tabular} \]
\noindent We can also compute this magic square using the method in the proof of the corollary.   Choosing the basis $P = [-1]B$ and $Q = D$; it can be easily checked that $R_k = [m]P \oplus [n]Q$ when $(m,n) = \phi(k)$.  If we also choose $(x_1, y_1) = (2,2)$ as the center of the $3 \times 3$ grid, then $R_k$ may be placed in the $(x_k, y_k)$-position, where
\[ \begin{aligned} x_k & \equiv x_1 + (k-1) - \left \lfloor \frac {k-1}N \right \rfloor \pmod {N} \\ y_k & \equiv y_1 - (k-1) + 2 \left \lfloor \frac {k-1}N \right \rfloor \pmod{N}  \end{aligned} \qquad \text{for} \qquad 1 \leq k \leq N^2. \]
\noindent As mentioned before, this is known as De la Loub\`ere's method or the Siamese method.

\bibliographystyle{plain}

\end{document}